\newcommand\RR{{\mathbb R}}
\newcommand\ZZ{{\mathbb Z}}
\newcommand{\smallSetOf}[2]{\{#1:#2\}} 
\DeclareMathOperator\conv{conv}
\theoremstyle{plain}
\newtheorem{theorem}{Theorem}
\newtheorem{corollary}[theorem]{Corollary}
\newtheorem{lemma}[theorem]{Lemma}
\theoremstyle{definition}
\newtheorem{example}[theorem]{Example}
\newtheorem{remark}[theorem]{Remark}
\title[Foldable Triangulations of Lattice Polygons]{Foldable Triangulations of\\Lattice Polygons}
\author[Joswig and Ziegler]{Michael Joswig \and G\"unter M. Ziegler}
\address{Michael Joswig, Fachbereich Mathematik, TU Darmstadt, Dolivostr.~15, 64293 Darmstadt, Germany}
\email{joswig@mathematik.tu-darmstadt.de}
\address{G\"unter M. Ziegler, Institut f\"ur Mathematik, Freie Universit\"at Berlin, Arnimallee~2, 14195 Berlin, Germany}
\email{ziegler@math.fu-berlin.de}
\thanks{The first author is supported by the DFG Priority Program 1489 ``Experimental Methods in
  Algebra, Geometry and Number Theory''. The second author is supported by the European Research
  Council under the European Union's Seventh Framework Programme (FP7/2007-2013)/\allowbreak ERC
  Grant agreement no.~247029-SDModels and by the DFG Research Center \textsc{Matheon} ``Mathematics
  for Key Technologies'' in Berlin.}
\begin{document}

\begin{abstract}
  We give a simple formula for the signature of a foldable triangulation of a lattice
  polygon in terms of its boundary.  This yields lower bounds on the number of real roots of
  certain systems of polynomial equations known as ``Wronski systems''.
\end{abstract}

\maketitle

\section{Introduction}
\noindent
Let $P$ be a $d$-dimensional \emph{lattice polytope} in $\RR^d$; that is, $P$ is the convex hull of
finitely many points in $\ZZ^d$.  Further, let $\Delta$ be a triangulation of~$P$.  We call $\Delta$
\emph{dense} if the points $P\cap\ZZ^d$ are precisely the vertices of~$\Delta$.  Moreover, we call
$\Delta$ \emph{foldable} if its dual graph is bipartite; equivalently, the vertices of the
$1$-skeleton considered as an abstract graph is $(d{+}1)$-colorable \cite{Joswig02}.  In this case
we may distinguish between ``black'' and ``white'' $d$-dimensional cells of~$\Delta$.  The
\emph{signature} $\sigma(\Delta)$ is defined as the absolute value of the difference of the numbers
of black and of white $d$-dimensional cells of $\Delta$ that have odd normalized volume.  The
\emph{normalized volume} of $P$ equals $d!$ times the Euclidean volume of $P$.  As $P$ is a lattice
polytope the normalized volume is an integer. Soprunova and Sottile~\cite{SoprunovaSottile06} prove
that $\sigma(\Delta)$ is a lower bound for the number of real roots of so-called ``Wronski systems''
of $d$ polynomials over the reals in $d$ unknowns associated with $P$ and $\Delta$.  For $d=1$ that
result amounts to the basic fact that every univariate real polynomial of odd degree has at least
one real zero.  In general, the signature $\sigma(\Delta)$ can be interpreted as the topological
degree of the map which ``folds'' the foldable triangulation~$\Delta$ onto the standard simplex of
the same dimension; see \cite[\S7.3]{Sottile12} and the references given there.  In various cases
the Soprunova--Sottile bound is sharp for generic coefficients.  Here we are concerned with the
planar case $d=2$, where we provide a new lower bound for the signature, and then an explicit
example of a Wronski system for which the bound is sharp.

In the literature foldable abstract simplicial complexes are also called ``balanced''; for instance,
see \cite{Stanley}.

\section{Lattice Polygons}
\noindent
Let $p$ and $q$ be lattice points in $\ZZ^2$.  We say that the line segment $[p,q]=\conv\{p,q\}$ is 
\emph{of~type~$X$} if the first coordinate of $p-q$ is odd and the other one is even.  Similarly, the  
segment is \emph{of~type~$Y$} if the second coordinate is odd and the other one is even.  For a  
segment \emph{of~type~$XY$} both coordinates are odd.

The key to our main result is the following observation.

\begin{lemma}\label{lemma1}
  Let $T$ be a lattice triangle of odd normalized area in the plane.  Then $T$ has precisely one
  edge of type~$X$, one of type~$Y$ and one of type~$XY$.
\end{lemma}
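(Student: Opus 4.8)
The plan is to reduce everything modulo $2$. Label the vertices of $T$ as $v_1,v_2,v_3\in\ZZ^2$ and put $a=v_2-v_1$, $b=v_3-v_2$, $c=v_1-v_3$, so that $a+b+c=0$. The type of an edge depends only on the class of its direction vector in $(\ZZ/2)^2$: the nonzero classes $(1,0)$, $(0,1)$, $(1,1)$ correspond to types $X$, $Y$, $XY$ respectively, while the class $(0,0)$ is the ``degenerate'' case in which both coordinates are even. So I would first set up this dictionary and observe that the conclusion of the lemma is equivalent to the statement that the three classes $\bar a,\bar b,\bar c\in(\ZZ/2)^2$ are exactly the three nonzero ones.

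Next I would relate the area to a determinant over $\ZZ/2$. The normalized area of $T$ equals $\lvert\det(v_2-v_1,\,v_3-v_1)\rvert=\lvert\det(a,\,a+b)\rvert=\lvert\det(a,b)\rvert$ by multilinearity, and reducing modulo $2$ shows that $T$ has odd normalized area if and only if $\det(\bar a,\bar b)\ne 0$ in $\ZZ/2$, i.e.\ if and only if $\bar a$ and $\bar b$ are linearly independent over the field $\ZZ/2$.

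The last step is a direct check in the $2$-dimensional $\ZZ/2$-vector space. Two vectors there are linearly independent exactly when both are nonzero and distinct; and if $\bar a,\bar b$ are distinct and nonzero, then $\bar c=\bar a+\bar b$ is forced to be the remaining nonzero vector, so $\{\bar a,\bar b,\bar c\}$ is the full set of nonzero classes. Conversely, if $\bar a$ and $\bar b$ are linearly dependent, then one of them vanishes or $\bar a=\bar b$, and in each such case the multiset $\{\bar a,\bar b,\bar a+\bar b\}$ has a repeated entry and omits a nonzero class, so $T$ fails to have one edge of each of the three types. Chaining the equivalences---odd normalized area $\iff$ $\bar a,\bar b$ independent over $\ZZ/2$ $\iff$ $\{\bar a,\bar b,\bar c\}$ is the set of nonzero classes $\iff$ $T$ has exactly one edge of each type $X$, $Y$, $XY$---proves the lemma.

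I do not expect a genuine obstacle here: the only place deserving a little care is keeping track of signs when expressing the area through the edge vectors, but modulo $2$ all signs are trivial, so this causes no trouble. A pleasant byproduct of the argument is that it makes transparent \emph{why} the three types must occur together, which should be convenient when the lemma is applied to an entire foldable triangulation.
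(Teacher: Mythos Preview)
Your proof is correct. The paper's argument is more pedestrian: it translates one vertex to the origin, writes the other two as $(a,b)$ and $(c,d)$, notes that $ad-bc$ odd forces (after possibly swapping the two nonzero vertices) both $a$ and $d$ to be odd and $bc$ to be even, and then runs through the remaining parity possibilities for $b$ and $c$, reading off the edge types in each case. Your reformulation via the reductions $\bar a,\bar b,\bar c\in(\ZZ/2)^2$ replaces that case split by the single observation that two vectors in a two-dimensional space over $\ZZ/2$ are independent exactly when they are distinct and nonzero, whence their sum is the remaining nonzero class. This buys a cleaner argument with no case distinctions and, as you note, the full equivalence rather than just the needed implication; the same framework also makes the paper's subsequent Remark (area mod~$2$ equals the number of edges of any fixed type mod~$2$) an immediate corollary. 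The paper's version, by contrast, requires nothing beyond inspecting a short list of cases, which some readers may find more concrete.
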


\begin{proof}
  Up to a translation we can assume that the vertices of $T$ are $(0,0)$, $(a,b)$ and $(c,d)$ with
  $a,b,c,d\in\ZZ$.  Then the normalized area of~$T$ is given by the absolute value of the determinant
  \[
  \det\begin{pmatrix} a & b \\ c & d \end{pmatrix} \ = \ ad-bc \, ,
  \]
  and our precondition says that this is odd.  It follows that the parities of the two products $ad$
  and $bc$ are distinct.  After possibly exchanging the vertices
  $(a,b)$ and $(c,d)$ we may assume that $ad$ is odd, and thus both $a$ and $d$ are odd.  

  If both $b$ and $c$ are even then the edge $[(0,0),(a,b)]$ is of
  type~$X$, the edge $[(0,0),(c,d)]$ is of type~$Y$, and the third edge $[(a,b),(c,d)]$ is of
  type~$XY$.  If, however, $b$ is odd and $c$ is even, then $[(0,0),(a,b)]$ is of type~$XY$,
  $[(0,0),(c,d)]$ is of type~$Y$ and $[(a,b),(c,d)]$ is of type~$X$.  The situation is similar if
  $b$ is even and $c$ is odd.
\end{proof}

\begin{remark}
  The preceding lemma admits the following generalization.  Let $T$ be an arbitrary lattice
  triangle.  Then the normalized area of $T$ is congruent modulo $2$ to the number of edges of type
  $\tau$, where $\tau\in\{X,Y,XY\}$ is any type.  We omit the proof, which is similar to the above.
\end{remark}

Now let $d=2$ and let $P$ be a lattice polygon with a dense and foldable triangulation~$\Delta$. 
For $d\le2$ foldable triangulations can also be characterized by the fact that  all interior vertices of 
its graph have even degree.
(Indeed, the interesting case is $d=2$, as for $d=1$ all triangulations are foldable.)
It~is also special to dimensions $d\le2$ that a triangulation is dense if and only if it is unimodular,
that is, each triangle has normalized area one. 
As~$\Delta$ is foldable we may distinguish between
``black'' and ``white'' triangles.  We can also assign colors, either black or white, to the
boundary edges of a foldable triangulation, according to the color of the unique triangle they are
contained in.  Thus we are ready for our main result.

\begin{figure}[th] 
\begin{tikzpicture}
\usetikzlibrary{calc}
 
\coordinate (v1) at (0,1);
\coordinate (v2) at (0,2); 
\coordinate (v4) at (1,1);
\coordinate (v5) at (1,2);
\coordinate (v6) at (1,3); 
\coordinate (v8) at (2,1);
\coordinate (v9) at (2,2);
\coordinate (v10) at (2,3);
\coordinate (v11) at (2,4);
\coordinate (v12) at (3,0);
\coordinate (v13) at (3,1);
\coordinate (v14) at (3,2);
\coordinate (v15) at (3,3);
\coordinate (v16) at (3,4);
\coordinate (v17) at (3,5);
\coordinate (v18) at (4,0);
\coordinate (v19) at (4,1);
\coordinate (v20) at (4,2);
\coordinate (v21) at (4,3);
\coordinate (v22) at (4,4);
\coordinate (v23) at (5,0);
\coordinate (v24) at (5,1);
\coordinate (v25) at (5,2);
\coordinate (v26) at (5,3);
\coordinate (v27) at (5,4);

\tikzstyle{edge} = [draw,thick,-,black]

\filldraw[edge,fill=gray] (v1) -- (v4) -- (v12) -- cycle;
\filldraw[edge,fill=white] (v8) -- (v4) -- (v12) -- cycle; 
\filldraw[edge,fill=white] (v1) -- (v4) -- (v5) -- cycle;
\filldraw[edge,fill=white] (v2) -- (v5) -- (v6) -- cycle;
\filldraw[edge,fill=gray] (v1) -- (v2) -- (v5) -- cycle; 
\filldraw[edge,fill=gray] (v4) -- (v5) -- (v8) -- cycle;
\filldraw[edge,fill=white] (v5) -- (v8) -- (v9) -- cycle;

\filldraw[edge,fill=gray] (v5) -- (v6) -- (v11) -- cycle;
\filldraw[edge,fill=white] (v5) -- (v10) -- (v11) -- cycle;
\filldraw[edge,fill=gray] (v5) -- (v9) -- (v10) -- cycle;
\filldraw[edge,fill=white] (v9) -- (v10) -- (v14) -- cycle;
\filldraw[edge,fill=gray] (v10) -- (v11) -- (v15) -- cycle;
\filldraw[edge,fill=white] (v11) -- (v15) -- (v16) -- cycle;
 
\filldraw[edge,fill=gray] (v8) -- (v12) -- (v18) -- cycle;

\filldraw[edge,fill=gray] (v8) -- (v9) -- (v14) -- cycle;
\filldraw[edge,fill=white] (v8) -- (v13) -- (v14) -- cycle;
\filldraw[edge,fill=white] (v8) -- (v18) -- (v23) -- cycle;
\filldraw[edge,fill=gray] (v8) -- (v13) -- (v23) -- cycle;

\filldraw[edge,fill=gray] (v10) -- (v14) -- (v20) -- cycle;
\filldraw[edge,fill=white] (v10) -- (v15) -- (v20) -- cycle;

\filldraw[edge,fill=white] (v16) -- (v20) -- (v21) -- cycle;
\filldraw[edge,fill=gray] (v15) -- (v16) -- (v20) -- cycle;

\filldraw[edge,fill=gray] (v13) -- (v14) -- (v19) -- cycle;
\filldraw[edge,fill=white] (v14) -- (v19) -- (v20) -- cycle;

\filldraw[edge,fill=white] (v13) -- (v19) -- (v23) -- cycle;

\filldraw[edge,fill=gray] (v19) -- (v20) -- (v23) -- cycle;
\filldraw[edge,fill=white] (v20) -- (v23) -- (v24) -- cycle;

\filldraw[edge,fill=gray] (v20) -- (v21) -- (v24) -- cycle;
\filldraw[edge,fill=white] (v21) -- (v24) -- (v25) -- cycle;

\filldraw[edge,fill=gray] (v16) -- (v21) -- (v22) -- cycle;
\filldraw[edge,fill=white] (v21) -- (v22) -- (v26) -- cycle;

\filldraw[edge,fill=gray] (v21) -- (v25) -- (v26) -- cycle;
\filldraw[edge,fill=gray] (v22) -- (v26) -- (v27) -- cycle;

\foreach \point in {v1,v2,v4,v5,v6,v8,v9,v10,v11,v12,v13,v14,v15,v16,v18,v19,v20,v21,v22,v23,v24,v25,v26,v27}
    \fill[black] (\point) circle (2pt);

\node [inner sep=1pt,label=below:$XY$] (X0) at ($ (v1)!.5!(v12) $) {};
	 
\node [inner sep=1pt,label=below:$X$] (X3) at ($ (v12)!.5!(v18) $) {};
\node [inner sep=1pt,label=below:$X$] (X4) at ($ (v18)!.5!(v23) $) {};
\node [inner sep=1pt,label=above:$X$] (X5) at ($ (v11)!.5!(v16) $) {};
\node [inner sep=1pt,label=above:$X$] (X6) at ($ (v16)!.5!(v22) $) {};
\node [inner sep=1pt,label=above:$X$] (X7) at ($ (v22)!.5!(v27) $) {};
 
\node [inner sep=1pt,label=left:$Y$] (Y1) at ($ (v1)!.5!(v2) $) {};
\node [inner sep=1pt,label=right:$Y$] (Y2) at ($ (v23)!.5!(v24) $) {};
\node [inner sep=1pt,label=right:$Y$] (Y3) at ($ (v24)!.5!(v25) $) {};
\node [inner sep=1pt,label=right:$Y$] (Y4) at ($ (v25)!.5!(v26) $) {};
\node [inner sep=1pt,label=right:$Y$] (Y5) at ($ (v26)!.5!(v27) $) {};

\node [inner sep=0pt,label=above left:$XY$] (XY0) at ($ (v2)!.5!(v6) $) {};
\node [inner sep=0pt,label=above left:$XY$] (XY1) at ($ (v6)!.5!(v11) $) {}; 

\end{tikzpicture}
  \caption{A lattice polygon with a dense and foldable triangulation}
  \label{fig:polygon}
\end{figure}
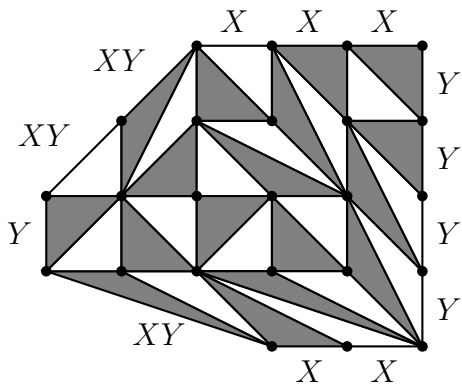

\begin{theorem}
  Let $\Delta$ be a dense and foldable triangulation of a planar lattice polygon $P$.  Then the
  signature $\sigma(\Delta)$ equals the absolute value of the difference between the numbers of black
  and of white edges of type~$\tau$, for any fixed $\tau\in\{X,Y,XY\}$.
\end{theorem}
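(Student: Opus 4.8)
The plan is to reduce the statement to a short matching argument among the triangles of $\Delta$, with Lemma~\ref{lemma1} doing the real work. Since $P$ is a planar lattice polygon and $\Delta$ is dense, $\Delta$ is unimodular, so \emph{every} triangle of $\Delta$ has normalized area $1$, which is odd. Hence, writing $b$ and $w$ for the numbers of black and of white triangles, we have $\sigma(\Delta)=|b-w|$. Moreover, by Lemma~\ref{lemma1}, every triangle of $\Delta$ has exactly one edge of type $X$, one of type $Y$, and one of type $XY$. Fix a type $\tau\in\{X,Y,XY\}$ for the rest of the argument.

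First I would introduce the auxiliary graph $G_\tau$ whose nodes are the triangles of $\Delta$, with an edge joining two triangles whenever they share an edge of $\Delta$ of type $\tau$. Each triangle has a unique type-$\tau$ edge, and an edge of $\Delta$ is contained in two triangles if it is interior and in exactly one triangle if it lies on $\partial P$; therefore every node of $G_\tau$ has degree at most one, i.e.\ $G_\tau$ is a matching together with isolated nodes, the isolated ones being exactly the triangles whose type-$\tau$ edge lies on $\partial P$. Because $\Delta$ is foldable, its dual graph is bipartite, so two triangles sharing an edge always carry opposite colors; in particular each edge of $G_\tau$ joins a black triangle to a white one. Hence among the matched nodes there are as many black triangles as white ones, and so
\[
  b - w \;=\; b_\tau - w_\tau,
\]
where $b_\tau$ (resp.\ $w_\tau$) is the number of black (resp.\ white) triangles whose type-$\tau$ edge lies on the boundary of $P$.

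It remains to recognize the right-hand side in terms of boundary edges. A boundary edge of $\Delta$ inherits the color of the unique triangle containing it, and by Lemma~\ref{lemma1} that edge is the type-$\tau$ edge of that triangle exactly when the edge has type $\tau$; conversely each triangle having a type-$\tau$ boundary edge contributes precisely one such edge. Thus $b_\tau$ equals the number of black boundary edges of type $\tau$ and $w_\tau$ the number of white ones. Taking absolute values in the displayed identity then gives $\sigma(\Delta)=|b-w|=|b_\tau-w_\tau|$, which is the assertion.

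I do not expect a genuine obstacle here once Lemma~\ref{lemma1} is available; the only delicate part is the bookkeeping that links three a priori unrelated quantities — one must use that density forces unimodularity so that all triangles (not just some) contribute to $\sigma(\Delta)$, that Lemma~\ref{lemma1} therefore applies to every triangle, and that foldability is precisely what turns $G_\tau$ into a color-alternating matching. I would also sanity-check the small or degenerate cases (for instance a triangle with two edges on $\partial P$) to be certain the ``matching plus isolated nodes'' description of $G_\tau$ holds without exception.
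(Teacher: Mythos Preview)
Your proof is correct and follows essentially the same approach as the paper: use Lemma~\ref{lemma1} to equip each (unimodular, hence odd-area) triangle with a unique type-$\tau$ edge, then pair up black and white triangles along interior type-$\tau$ edges so that only the boundary type-$\tau$ edges survive in the count. Your auxiliary graph $G_\tau$ makes explicit the ``perfect matching'' that the paper invokes in one line.
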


\begin{proof}
  By Lemma~\ref{lemma1} each triangle of $\Delta$ has precisely one edge of type~$X$, one edge of type~$Y$,
  and one edge of type~$XY$. Fix $\tau$. Each $\tau$ edge is either an interior edge, in which case
  it is contained in exactly one black and one white triangle, or it is on the boundary.
  Conversely, each triangle contains a $\tau$ edge, and so the interior $\tau$ edges induce a
  perfect matching on the interior triangles of $\Delta$.  Their total contribution to the signature
  is zero.  The claim follows.
\end{proof}

This result can be interpreted as a combinatorial version of Green's Theorem on integrating a
potential over a simply connected planar domain by integrating over the boundary.  It also bears
some similarity with Pick's Theorem, which states that a lattice polygon of Euclidean area $A$ and
$I$ interior lattice points has exactly
\begin{equation}\label{eq:pick}
  B \ = \ 2\cdot (A-I+1)
\end{equation}
lattice points on the boundary; see Beck and Robbins \cite[\S2.6]{BeckRobbins07} as well as the 
``Green's Theorem style'' proof of Pick's Theorem by Blatter~\cite{Blatter}.

\begin{example}
  Figure~\ref{fig:polygon} shows a lattice hexagon with a dense and foldable triangulation.  It has
  17 black triangles and 16 white ones; so the signature equals one.  At the same time it has three
  black and two white boundary edges of type $X$, three black and two white boundary edges of type
  $Y$, as well as two black and one white boundary edges of type $XY$.  For each type there is a
  surplus of exactly one black edge.
\end{example}

The following corollary solves a problem which the first author presented at the Oberwolfach
meeting on Geometric and Topological Combinatorics in 2007.  The authors gratefully acknowledge the
repeated hospitality of the Mathematisches For\-schungs\-institut; this paper was written during another
stay at the Institut in 2012.

\begin{corollary}
  Let $P$ be an axis-parallel lattice rectangle in the plane.  Then the signature of any dense and
  foldable triangulation of $P$ vanishes.
\end{corollary}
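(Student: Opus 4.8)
The plan is to apply the Theorem with the single choice $\tau=XY$, and to argue that an axis-parallel rectangle simply has no boundary edges of this type. First I would record that since $\Delta$ is dense, every lattice point of $P$ that lies on $\partial P$ is a vertex of $\Delta$; hence each boundary edge of $\Delta$ is the segment between two consecutive lattice points along a side of $P$. Because $P$ is an axis-parallel rectangle, each side of $P$ is horizontal or vertical, so two consecutive lattice points on it differ by the vector $(\pm1,0)$ (on a horizontal side) or $(0,\pm1)$ (on a vertical side).

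Next I would match these two cases against the type definitions from Section~2: a difference vector $(\pm1,0)$ has odd first coordinate and even second coordinate, so such an edge is of type~$X$, while $(0,\pm1)$ gives an edge of type~$Y$. In particular no boundary edge of $\Delta$ is of type~$XY$, since a type-$XY$ segment must have both coordinates of its difference vector odd. Thus the numbers of black and of white boundary edges of type~$XY$ are both zero.

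Finally I would invoke the Theorem: for the fixed type $\tau=XY$ it states that $\sigma(\Delta)$ equals the absolute value of the difference between the numbers of black and of white boundary edges of type~$XY$, which we have just seen is $|0-0|=0$. Hence $\sigma(\Delta)=0$, as claimed.

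I do not expect any real obstacle here; the Theorem does all the work, and the only point requiring (minimal) care is the observation that density forces the boundary edges of $\Delta$ to be primitive segments joining consecutive boundary lattice points, so that on an axis-parallel side they are genuinely unit segments in a coordinate direction rather than longer or slanted segments. One could alternatively phrase this using the remark that the boundary of $P$ decomposes into the coordinate directions, but the density argument is the most direct.
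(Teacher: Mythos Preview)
Your argument is correct and follows exactly the paper's approach: apply the Theorem with $\tau=XY$ and observe that an axis-parallel rectangle has no boundary edges of type~$XY$. The paper's proof is the one-line version of this (``There are no $XY$ edges in the boundary''), while you have spelled out why density forces the boundary edges to be unit coordinate segments; this extra care is fine but not strictly needed.
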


\begin{proof}
  There are no $XY$ edges in the boundary.
\end{proof}

Our main result entails a general upper bound on the signature for lattice polygons.

\begin{corollary}
  Let $\Delta$ be a dense and foldable triangulation of a planar lattice polygon $P$.  Then the
  signature $\sigma(\Delta)$ is bounded from above by
  \[
  \lfloor \tfrac{2}{3} (A-I+1) \rfloor
  \]
  where $A$ is the Euclidean area and $I$ is the number of interior lattice points of $P$.
\end{corollary}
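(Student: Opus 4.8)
The plan is to combine the main theorem with Pick's Theorem \eqref{eq:pick} by a short pigeonhole argument. First I would note that since $\Delta$ is dense it is unimodular, so every edge of $\Delta$ lies in a triangle; hence by Lemma~\ref{lemma1} each boundary edge of $\Delta$ has a well-defined type $X$, $Y$, or $XY$. Writing $b_X$, $b_Y$, $b_{XY}$ for the numbers of boundary edges of the respective types, these three counts partition the set of boundary edges, so $b_X+b_Y+b_{XY}=B$, where $B$ is the total number of boundary edges of $\Delta$. Again because the triangulation is dense, no boundary edge has an interior lattice point, so $B$ coincides with the number of lattice points on $\partial P$; by Pick's Theorem \eqref{eq:pick} this equals $2(A-I+1)$.

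Next I would apply the main theorem: for each fixed $\tau\in\{X,Y,XY\}$ the signature $\sigma(\Delta)$ is the absolute value of the difference of two nonnegative integers (the numbers of black and white $\tau$ boundary edges) whose sum is $b_\tau$, whence $\sigma(\Delta)\le b_\tau$. Therefore $\sigma(\Delta)\le\min\{b_X,b_Y,b_{XY}\}$. Finally, three nonnegative integers summing to $B$ cannot all exceed $\lfloor B/3\rfloor$, since otherwise their sum would be at least $3(\lfloor B/3\rfloor+1)\ge B+1$; hence the minimum is at most $\lfloor B/3\rfloor=\lfloor\tfrac{2}{3}(A-I+1)\rfloor$. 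As $\sigma(\Delta)$ is an integer, this yields the asserted bound.

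I do not expect a genuine obstacle here; the only points needing a word of care are that density forces every boundary edge to carry a type (so the three counts really do add up to $B$) and that the edge count $B$ agrees with the boundary lattice-point count in Pick's Theorem. Both follow at once from unimodularity.
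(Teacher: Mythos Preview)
Your proof is correct and follows essentially the same pigeonhole-plus-Pick argument as the paper: bound $\sigma(\Delta)$ by the minimum of the three boundary-edge type counts, then bound that minimum by $\lfloor B/3\rfloor=\lfloor\tfrac{2}{3}(A-I+1)\rfloor$. You are in fact slightly more careful than the paper in spelling out why every boundary edge carries a type and why the edge count matches the boundary lattice-point count.
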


\begin{proof}
  By Pick's Theorem~\eqref{eq:pick} the number $B$ of lattice points on the boundary of $P$ equals
  $A-I+1$.  Now $B$ is also the number of boundary edges of the triangulation $\Delta$, and so there
  is some type $\tau\in\{X,Y,XY\}$ such that the number of $\tau$-edges in the boundary does not
  exceed $\lfloor \frac{1}{3} B \rfloor$.  Since the signature of $\Delta$ cannot exceed the number of
  boundary edges of type~$\tau$ the claim follows.
\end{proof}

\begin{example}  
  There are lattice polygons for which this bound is sharp.  Consider the triangle
  $P=\conv\{(0,0),(1,0),(0,1)\}$.  Here $A=\frac{1}{2}$, $I=0$, and
  $\frac{2}{3}(A-I+1)=\frac{2}{3}(\frac{1}{2}-0+1)=1$, which equals the signature of the trivial
  triangulation of $P$.  More generally, the dilates $n P$ for arbitrary integral $n\ge 1$ are
  lattice triangles with a triangulation $\Delta_n$ induced by the integral translates of the
  coordinate axes and the diagonal line $\smallSetOf{(x,y)\in\RR^2}{x+y=0}$.  This triangulation is
  dense and foldable.  All the boundary edges share the same color, and we have exactly $n$ boundary
  edges of each type.  The signature of $\Delta_n$ equals $n$ and this coincides with our bound.
  However, the example of axis-parallel rectangles shows that our bound on the signature can be
  arbitrarily bad in general.  The triangle $\Delta_2$ is shown in Figure~\ref{fig:triangle} below.
\end{example}
 

\section{Wronski Polynomials}
\noindent
In the remainder of this note we will sketch how our results are related to bivariate polynomial
systems.  Omitting most of the technical details we aim at explaining one simple example.

\begin{figure}[th]
  \subfigure[The lattice points are $3$-colored]{%
    \begin{tikzpicture}[scale=2]
      \usetikzlibrary{calc}
      
      \tikzstyle{edge} = [draw,thick,-,black]
      \tikzstyle{node} = [circle,draw,fill=white,scale=0.75]
      
      \newcommand\size{2}
      
      \foreach \i in {1,...,\size} {
        \foreach \j in {\i,...,\size} {
          \filldraw[edge,fill=gray] (\i - 1, \size - \j) -- (\i, \size - \j) -- (\i - 1, \size - \j + 1) -- cycle;
        }
      }
      
      \node[node] at (0,0) {1};
      \node[node] at (0,1) {2};
      \node[node] at (0,2) {3};
      \node[node] at (1,0) {3};
      \node[node] at (1,1) {1};
      \node[node] at (2,0) {2};
    \end{tikzpicture}
    \label{fig:triangle}
  }
  \qquad
  \subfigure[Two conics intersecting in two real points]{\includegraphics[width=.35\textwidth]{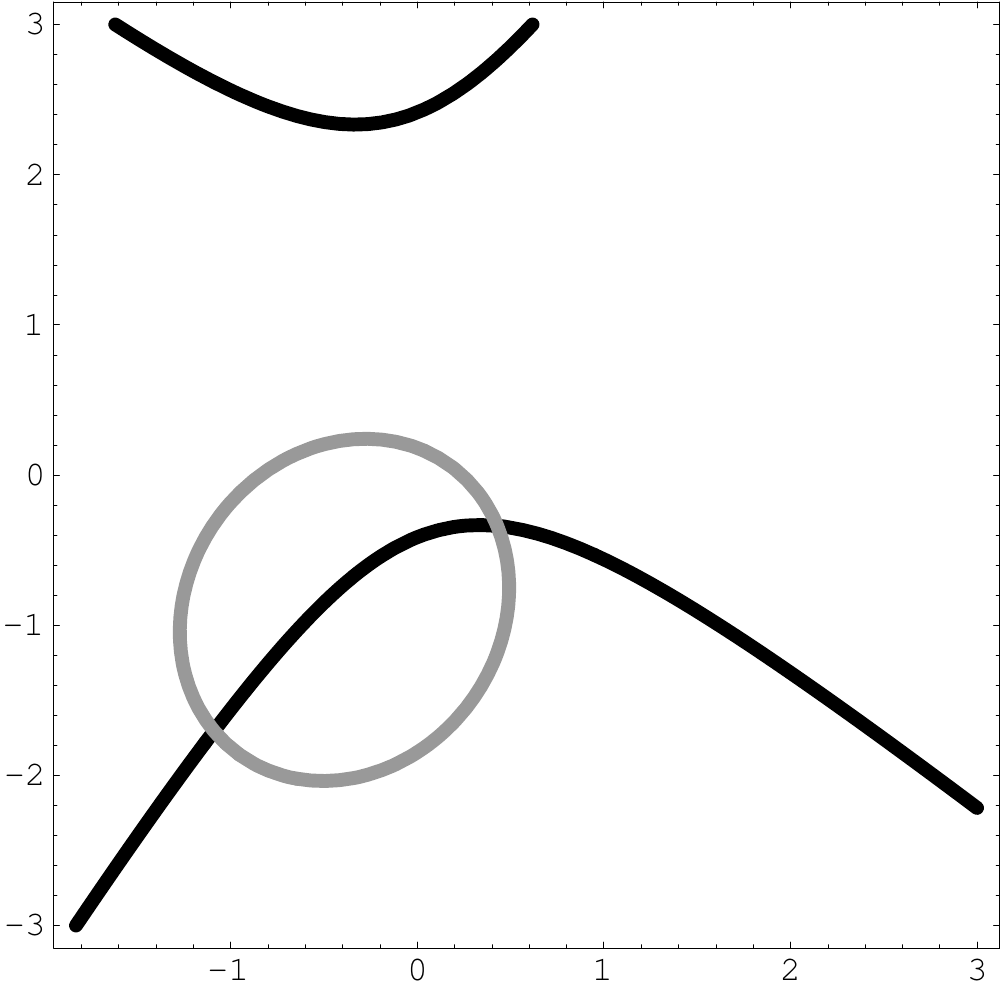}\label{fig:curves}}
  \caption{Lattice triangle and curves of a related bivariate Wronski system}
  \label{fig:wronski}
\end{figure}

Let $P$ be a lattice polygon with a dense and foldable triangulation $\Delta$.  As $\Delta$ is
foldable, the lattice points in $P$ can be assigned numbers from the set $\{1,2,3\}$ such that two
points receive distinct labels whenever they lie on a common edge of $\Delta$.  That is to say, the
primal graph of $\Delta$ (which is clearly planar) is $3$-colorable.  A proof follows from Heawood's
criterion on the $3$-colorability of maximal planar graphs since $\Delta$ being foldable forces that
the dual graph of $\Delta$ does not contain any odd cycles \cite{Heawood}.  In fact, up to
relabeling such an assignment, $c:P\cap\ZZ^2\to\{1,2,3\}$, is unique.  Now for any choice of
non-zero real numbers $\gamma_1,\gamma_2,\gamma_3$ the polynomial
\[
\sum_{(i,j)\in P\cap\ZZ^2}\gamma_{c(i,j)}x^iy^j
\]
is a bivariate \emph{Wronski polynomial} with respect to $\Delta$.  Since the coefficients
$\gamma_i$ are non-zero the Newton polygon of such a polynomial is $P$.  A bivariate 
\emph{Wronski system} consists of two bivariate Wronski polynomials with respect to the same triangulation
$\Delta$.  Provided that the coefficients are chosen sufficiently generic the two polynomials have
just finitely many complex roots in common.  A classical result of Kushnirenko says that in this
case the number of common complex roots coincides with the normalized area of the polygon $P$; see
\cite{BKK} and \cite[Thm.~3.2]{Sottile12}.  Under additional assumptions Soprunova and Sottile show
that the signature $\sigma(\Delta)$ yields a lower bound for the number of common \emph{real} roots;
see \cite{SoprunovaSottile06} and \cite[Thm.~7.13]{Sottile12}.

\begin{example}
  Let $P$ be the lattice triangle $\conv\{(0,0),(2,0),(0,2)\}$ with the dense and foldable
  triangulation $\Delta$ shown in Figure~\ref{fig:triangle}.  This figure also defines the labeling
  $c$ of the lattice points.  For instance, $c(0,0)=1$ and $c(1,0)=3$.  Choosing $(1,-1,2)$ and $(1,3,5)$ 
  as coefficient vectors we obtain the Wronski system
  \begin{equation}\label{eq:wronski}
    \begin{aligned}
      (1+xy)- (x+y^2)+2(x^2+y) \ &= \ 0\\
      (1+xy)+3(x+y^2)+5(x^2+y) \ &= \ 0
    \end{aligned}
  \end{equation}
  of bivariate polynomial equations.  The real conic defined by the first polynomial is a hyperbola,
  the second one defines an ellipse; see Figure~\ref{fig:curves}.  The normalized area of $P$ is
  four and so, by Kushnirenko's Theorem, the polynomial system \eqref{eq:wronski} has exactly four
  complex solutions (the coefficients \emph{are} sufficiently generic).  The signature of $\Delta$
  equals two and thus, by Soprunova and Sottile, at least two of the complex solutions must be real
  (again all extra conditions are met in this case).
\end{example}

The results on Wronski systems generalize to arbitrary dimension $d$, in which case we have $d$
polynomials in $d$ indeterminates.  It is instructive to look at the case $d=1$.  If $f$ is a
univariate real polynomial with non-vanishing constant coefficient its $1$-dimensional Newton
polytope is the interval from $0$ to the degree of $f$.  Its normalized volume is the length of the
interval, which equals the degree.  If the coefficients of $f$ are sufficiently generic there are
degree many complex roots.  In this sense Kushnirenko's Theorem generalizes the Fundamental Theorem
of Algebra.  To obtain a lower bound for the number of \emph{real} roots of $f$, consider the unique
dense triangulation of the interval $[0,\deg f]$ into unit intervals.  This is clearly foldable, and
its signature is the parity of the degree.  This means that the result by Soprunova and Sottile
generalizes the well known fact that a univariate real polynomial of odd degree has at least one
real root.

\end{document}